\newtheorem{theorem}{Theorem}[section]
\newtheorem{lemma}[theorem]{Lemma}
\definecolor{shadecolor}{cmyk}{0,0,0,0.05} %gray
\begin{document}

\title{\LARGE\bf Algorithmic determination \\
of a large integer in the two-term \\
Machin-like formula for $\pi$}

%\bigskip
\author[1, 2, 3]{\small Sanjar M. Abrarov}
\author[2, 3, 4]{\small Rehan Siddiqui}
\author[3, 4]{\small Rajinder K. Jagpal}
\author[1, 2, 4]{\small \\ Brendan M. Quine}

\affil[1]{\scriptsize Thoth Technology Inc., Algonquin Radio Observatory, Achray Rd, RR6, Pembroke, Canada, K8A 6W7 \normalsize}
\affil[2]{\scriptsize Dept. Earth and Space Science and Engineering, York University, 4700 Keele St., Canada, M3J 1P3 \normalsize}
\affil[3]{\scriptsize Epic College of Technology, 5670 McAdam Rd., Mississauga, Canada, L4Z 1T2 \normalsize}
\affil[4]{\scriptsize Dept. Physics and Astronomy, York University, 4700 Keele St., Toronto, Canada, M3J 1P3 \normalsize}

\date{September 12, 2021}
\maketitle
%\vspace{1cm}%\bigskip

\vspace{-0.5cm}
\begin{abstract}
In our earlier publication we have shown how to compute by iteration a rational number $u_{2,k}$ in the two-term Machin-like formula for $\pi$ of the kind $$\frac{\pi}{4}=2^{k-1}\arctan\left(\frac{1}{u_{1,k}}\right)+\arctan\left(\frac{1}{u_{2,k}}\right),\qquad k\in\mathbb{Z},\quad k\ge 1,$$ where $u_{1,k}$ can be chosen as an integer $u_{1,k}=\left\lfloor{a_k/\sqrt{2-a_{k - 1}}}\right\rfloor$ with nested radicals defined as $a_k=\sqrt{2+a_{k - 1}}$ and $a_0 = 0$. In this work, we report an alternative method for determination of the integer $u_{1,k}$. This approach is based on a simple iteration and does not require any irrational (surd) numbers from the set $\left\{a_k\right\}$ in computation of the integer $u_{1,k}$. Mathematica programs validating these results are presented.
\vspace{0.25cm}
\\
\noindent {\bf Keywords:} constant $\pi$; Machin-like formula; Lehmer's measure; surd number; Ramanujan's nested radical \\
\vspace{0.25cm}
\end{abstract}

\section{Introduction}

Historically, a computation of decimal digits of $\pi$ was a big challenge until 1706, when the English astronomer and mathematician John Machin discovered a two-term formula for $\pi$ as given by
\begin{equation}\label{eq_1}
\frac{\pi }{4} = 4\arctan \left( {\frac{1}{5}} \right) - \arctan \left( {\frac{1}{{239}}} \right),
\end{equation}
that is named in his honor now. Using this remarkable formula he first was able to calculate $100$ decimal digits of $\pi$ \cite{Beckmann1971, Berggren2004, Borwein2008}. Nowadays the identities of kind
\begin{equation}\label{eq_2}
\frac{\pi }{4} = \sum\limits_{j = 1}^J {{A_j}\arctan \left( {\frac{1}{{{B_j}}}} \right)},
\end{equation}
where ${A_j}$ and ${B_j}$ are rational numbers, are regarded as the Machin-like formulas for $\pi$. Interestingly that some of them, including the original Equation~\eqref{eq_1}, can be proved geometrically~\cite{Nelsen1990, Popesku2021}. It is very often when in the Machin-like formulas for $\pi$ the constants ${A_j}$ and ${B_j}$ are both integers~\cite{Lehmer1938, Wetherfield1996, Chien-Lih1997}. The more complete lists of formulas of kind \eqref{eq_2} can be found in the references~\cite{URL1,URL2} and weblinks provided~therein.

The significance of the Machin-like formulas cannot be overestimated as their application may be one of the most efficient ways in computing $\pi$. Historically, only these formulas were able to compete with Chudnovsky formula \cite{Berggren2004,Ravi2013} to beat the records in computation of the decimal digits of $\pi$. In particular, in~2002, Kanada first computed more than one trillion digits of $\pi$ by using the following self-checking pair of the Machin-like formulas~\cite{Calcut2009}
\small
$$
\frac{\pi }{4} = 44\arctan \left( {\frac{1}{{57}}} \right) + 7\arctan \left( {\frac{1}{{239}}} \right) - 12\arctan \left( {\frac{1}{{682}}} \right) + 24\arctan \left( {\frac{1}{{12943}}} \right)
$$
\normalsize
and
\small
$$
\frac{\pi }{4} = 12\arctan \left( {\frac{1}{{49}}} \right) + 32\arctan\left( {\frac{1}{{57}}} \right) - 5\arctan \left( {\frac{1}{{239}}} \right) + 12\arctan \left( {\frac{1}{{110443}}} \right).
$$
\normalsize
Such an achievement made by Kanada shows a colossal potential of the Machin-like formulas for computation of the decimal digits of $\pi$.

As a simplest case one can apply the Maclaurin expansion series for computation of the arctangent functions in Equation~\eqref{eq_2}
$$
\arctan \left( x \right) = x - \frac{{{x^3}}}{3} + \frac{{{x^5}}}{5} - \frac{{{x^7}}}{7} +  \cdots  = \sum\limits_{n = 0}^\infty  {\frac{{{{\left( { - 1} \right)}^n}{x^{2n + 1}}}}{{2n + 1}}}
$$
and since this equation implies
\begin{equation}\label{eq_3}
\arctan \left( x \right) = x + O\left( {{x^3}} \right),
\end{equation}
we can conclude that it would be very desirable to have the coefficient ${B_j}$ as large as possible by absolute value in order to improve the convergence~rate.

Although the Maclaurin expansion series of the arctangent function can be simply implemented, its application is not optimal. The~more efficient way to compute the arctangent functions in Equation~\eqref{eq_2} is to use the Euler's expansion formula~\cite{Chien-Lih2005}
$$
\arctan \left( x \right) = \sum\limits_{n = 0}^\infty  {\frac{{{2^{2n}}{{\left( {n!} \right)}^2}}}{{\left( {2n + 1} \right)!}}\frac{{{x^{2n + 1}}}}{{{{\left( {1 + {x^2}} \right)}^{n + 1}}}}} .
$$
Alternatively, the following expansion series
$$
\arctan \left( x \right) = 2\sum\limits_{n = 1}^\infty  {\frac{1}{{2n - 1}}\frac{{{g _n}\left( x \right)}}{{g _n^2\left( x \right) + h _n^2\left( x \right)}}} ,
$$
where
$$
{g _1}\left( x \right) = 2/x, \; {h _1}\left( x \right) = 1,
$$ 
$$
{g _n}\left( x \right) = \left( {1 - 4/{x^2}} \right){g _{n - 1}} + 4{h _{n - 1}}\left( x \right)/x,
$$
$$
{h _n}\left( x \right) = \left( {1 - 4/{x^2}} \right){h _{n - 1}} - 4{g _{n - 1}}\left( x \right)/x,
$$
can also be used for more rapid convergence. This formula % This sentence is within paragraph now
 can be obtained by a trivial rearrangement of the Equation~(5) from our work~\cite{Abrarov2018a} (see also~\cite{Abrarov2017a}).

In 1938 Lehmer introduced a measure defined as~\cite{Lehmer1938, Tweddle1991}
$$
\mu  = \sum\limits_{j = 1}^J {\frac{1}{{{{\log }_{10}}\left( {\left| {{B_j}} \right|} \right)}}}.
$$
This measure can be used to determine a computational efficiency of a given Machin-like formula for $\pi$. Specifically, when value of the constant $\mu $ is smaller, then less computational labour is required to compute $\pi$ by a given Machin-like formula. Therefore, it is very desirable to reduce the number of the terms $J$ and to increase ${B_j}$ by absolute value. The~more detailed information about the Lehmer’s measure $\mu$ and its significance for efficient computation of $\pi$ can be found in literature~\cite{Wetherfield1996}.

In our previous publication using de Moivre's formula we derived the two-term Machin-like formula for $\pi$ \cite{Abrarov2017a}
\small
\begin{equation}\label{eq_4}
\frac{\pi}{4}=2^{k-1}\arctan\left(\frac{1}{u_{1,k}}\right)+\arctan\left(\frac{1-\sin\left(2^{k-1}\arctan\left(\frac{2u_{1,k}}{u_{1,k}^2-1}\right)\right)}{\cos\left(2^{k-1}\arctan\left(\frac{2u_{1,k}}{u_{1,k}^2-1}\right)\right)}\right),
\end{equation}
\normalsize
where ${u_{1,k}}$ can be chosen as an integer
\begin{equation}\label{eq_5}
{u_{1,k}} = \left\lfloor {\frac{{{a_k}}}{{\sqrt {2 - {a_{k - 1}}} }}} \right\rfloor,
\end{equation}
such that a set of nested radicals $\left\{ {{a_k}} \right\}$ can be computed as ${a_k} = \sqrt {2 + {a_{k - 1}}} $ starting from $a_0=0$.

In the recent publication the researcher(s) from the Wolfram Mathematica~\cite{WolframCloud} demonstrated an example for computation of $\pi$ as a rational fraction. In~particular, it was shown that with integer constant ${u_{1,1000}}$ the constant $\pi$ can be computed such that the ratio
$$
\frac{\pi }{4} \approx \frac{{{2^{999}}}}{{{u_{1,1000}}}}
$$
results in more than $300$ correct decimal digits. However, the~method of computation shown in~\cite{WolframCloud} is based on Equation~\eqref{eq_5} that involves the nested radicals consisting of multiple square roots of $2$ \cite{Herschfeld1935, Servi2003, Levin2005, Kreminski2008, Borwein1991, Rao2005}. Although~Equation~\eqref{eq_5} helps generate the required integers $u_{1,k}$, it should not be generally used at larger values $k$ since a function based on multiple square roots is not an elementary. Therefore, a~simple method based on rational approximation would be preferable. In~this work, we develop a new method of computation of the integer ${u_{1,k}}$ that excludes application of the set of nested radicals $\left\{ {{a_k}} \right\}$. This approach is simple and does not require any irrational (surd) numbers in~computation.

\section{Preliminaries}

Suppose that
\begin{equation}\label{eq_6}
\frac{\pi }{4} = \alpha \arctan \left( {\frac{1}{\gamma }} \right), \qquad \alpha, \, \gamma  \in \mathbb{R}.
\end{equation}
The simplest case when $\alpha  = \gamma  = 1$. However, there may be infinitely many identities of kind \eqref{eq_6}. Let us show the infinitude of this kind of~formulas.

\begin{theorem}\label{th_1}
There are infinitely many numbers $\alpha$ and $\gamma$ satisfying the relation \eqref{eq_6}.
\end{theorem}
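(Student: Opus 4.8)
The idea is to produce, from the single trivial identity $\pi/4 = 1\cdot\arctan(1/1)$, a continuum of parametrized solutions by exploiting the freedom in choosing $\alpha$. The natural move is to fix any real number $\alpha$ (say $\alpha > 1$, or even just $\alpha \neq 0$), solve Equation~\eqref{eq_6} for $\gamma$, and check that the resulting $\gamma$ is a well-defined real number. Explicitly, from $\frac{\pi}{4} = \alpha\arctan(1/\gamma)$ we get $\arctan(1/\gamma) = \frac{\pi}{4\alpha}$, hence
\[
\gamma = \frac{1}{\tan\!\left(\dfrac{\pi}{4\alpha}\right)} = \cot\!\left(\frac{\pi}{4\alpha}\right).
\]

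**Key steps, in order.** First I would set up the reduction: for a given $\alpha \in \mathbb{R}\setminus\{0\}$, define $\gamma := \cot(\pi/(4\alpha))$ and observe that this is the unique $\gamma$ making \eqref{eq_6} hold, provided the cotangent is defined and nonzero at the relevant argument. Second, I would restrict to a range of $\alpha$ guaranteeing that $\pi/(4\alpha)$ lands in $(0,\pi/2)$ — for instance all real $\alpha \geq 1$ — so that $\arctan(1/\gamma) = \pi/(4\alpha)$ is literally the arctangent (principal branch) of a positive real, with no branch ambiguity; one checks $\pi/(4\alpha) \in (0,\pi/4] \subset (0,\pi/2)$, so $\cot(\pi/(4\alpha)) > 0$ is finite and $\gamma \geq 1$. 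Third, I would note that distinct values of $\alpha$ in $[1,\infty)$ give distinct values of $\gamma$ because $\alpha \mapsto \cot(\pi/(4\alpha))$ is strictly monotincreasing on that interval (the inner argument is strictly decreasing, cotangent is strictly decreasing on $(0,\pi/2)$, so the composition is strictly increasing); hence the map $\alpha \mapsto (\alpha, \gamma)$ is injective. Since $[1,\infty)$ is infinite (indeed uncountable), this exhibits infinitely many pairs $(\alpha,\gamma)$ satisfying \eqref{eq_6}, proving the claim. As a sanity check, $\alpha = 1$ recovers $\gamma = \cot(\pi/4) = 1$.

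**Anticipated obstacle.** There is no deep obstacle here — the statement is essentially a one-line observation once one inverts the equation. The only point requiring a little care is the well-definedness and branch bookkeeping: one must confirm that $\tan(\pi/(4\alpha)) \neq 0$ (so $\gamma$ is finite) and that $1/\gamma$ actually lies in the domain where $\arctan$ returns $\pi/(4\alpha)$ rather than some shifted value. Restricting $\alpha$ to $[1,\infty)$ (or any sub-interval thereof) dispatches this cleanly, since then $\pi/(4\alpha) \in (0,\pi/4]$ and everything is unambiguous. If one wanted the strongest form of the statement — uncountably many such pairs — the same argument already delivers it, since the parameter $\alpha$ ranges over an interval.
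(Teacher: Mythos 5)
Your proof is correct, but it takes a genuinely different route from the paper. You invert Equation~\eqref{eq_6} directly: for each $\alpha\ge 1$ you set $\gamma=\cot\left(\pi/(4\alpha)\right)$, check that $\pi/(4\alpha)\in(0,\pi/4]$ so the principal branch of the arctangent gives back exactly $\pi/(4\alpha)$, and note injectivity of $\alpha\mapsto\gamma$; this yields not just infinitely many but uncountably many pairs $(\alpha,\gamma)$, with no input beyond monotonicity of the cotangent. The paper instead exhibits a single explicit countable family, namely Equation~\eqref{eq_7}, $\pi/4=2^{k-1}\arctan\left(\sqrt{2-a_{k-1}}/a_k\right)$ with $a_k=\sqrt{2+a_{k-1}}$, $a_0=0$, so that $\alpha=2^{k-1}$ and $\gamma=a_k/\sqrt{2-a_{k-1}}$ for each integer $k\ge1$; this relies on an identity derived in an earlier publication rather than on inverting the equation. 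What your argument buys is generality and self-containedness (a continuum of solutions from a one-line inversion, with only branch bookkeeping to check); what the paper's choice buys is that its particular family is the one actually used throughout the rest of the paper, since $\gamma=a_k/\sqrt{2-a_{k-1}}$ is precisely the quantity whose floor defines the integer $u_{1,k}$ in Equation~\eqref{eq_5}, so the proof doubles as an introduction of the objects studied later. Both proofs are valid; yours is the more elementary and stronger statement, the paper's is the more purpose-built one.
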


\begin{proof}
The proof becomes straightforward by considering the following example
\begin{equation}\label{eq_7}
\frac{\pi }{4} = 2^{k - 1}\arctan\left(\frac{\sqrt{2 - a_{k - 1}}}{  a_k} \right), \qquad k \in \mathbb{Z}, \qquad k \ge 1,
\end{equation}
where nested radicals are computed as ${a_k} = \sqrt {2 + {a_{k - 1}}} $ at ${a_0} = 0$. Comparing \linebreak Equations \eqref{eq_6} with \eqref{eq_7} immediately yields that $\alpha = 2^{k-1}$ and $\gamma  = {a_k}/\sqrt {2 - {a_{k - 1}}}$.
The derivation of Equation~\eqref{eq_7} is very simple and can be found in~\cite{Abrarov2018a}.
\end{proof}

It is interesting to note that using Equation~\eqref{eq_7} one can easily prove the well-known Equation~\eqref{eq_8} for $\pi$ below. 

\begin{theorem}\label{th_2}
We have that
\begin{equation}\label{eq_8}
\pi  = \mathop {\lim }\limits_{k \to \infty } {2^k}\sqrt {2 - \underbrace{\sqrt {2 + \sqrt {2 + \sqrt {2 +  \cdots }}}}_{k - 1\,\rm{square\, roots}}}.
\end{equation}
\end{theorem}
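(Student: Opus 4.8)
\emph{Proof plan.} The plan is to read Equation~\eqref{eq_8} off Equation~\eqref{eq_7}, which is already available from Theorem~\ref{th_1}. Since \eqref{eq_7} is an exact identity valid for every $k\ge 1$, rearranging it gives
$\pi = 2^{k+1}\arctan\!\left(\sqrt{2-a_{k-1}}\,/\,a_k\right)$ for all $k$, so in fact nothing needs to be ``taken to the limit'' on the left: the entire content of the theorem is that replacing $\arctan$ by its argument costs only a vanishing relative error, and that the harmless denominator $a_k$ can be absorbed when $k\to\infty$.

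First I would record the elementary properties of the sequence $\{a_k\}$. A routine induction shows $0=a_0<a_1<a_2<\cdots$ and $a_k<2$ for all $k$ (if $a_{k-1}<2$ then $a_k=\sqrt{2+a_{k-1}}<\sqrt 4=2$). Hence $\{a_k\}$ is monotone and bounded, so it converges to some $L$; passing to the limit in $a_k=\sqrt{2+a_{k-1}}$ gives $L^2=2+L$, i.e. $(L-2)(L+1)=0$, so $L=2$. In particular $2-a_{k-1}>0$ (so the radical in \eqref{eq_7} and \eqref{eq_8} is real), and $x_k:=\sqrt{2-a_{k-1}}\,/\,a_k\to 0^{+}$ as $k\to\infty$.

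Next I would invoke the small-argument behaviour of the arctangent from \eqref{eq_3}, equivalently the standard limit $\lim_{t\to 0}\arctan(t)/t=1$. Writing the exact identity as $\pi = 2^{k+1}x_k\cdot\bigl(\arctan(x_k)/x_k\bigr)$ and solving for $2^{k+1}x_k$ gives $2^{k+1}x_k=\pi\,x_k/\arctan(x_k)$, whose right-hand side tends to $\pi$ because $x_k\to 0^{+}$. Therefore $2^{k+1}\sqrt{2-a_{k-1}} = a_k\cdot\bigl(2^{k+1}x_k\bigr)\to 2\cdot\pi$, and dividing by $2$ yields $\lim_{k\to\infty}2^{k}\sqrt{2-a_{k-1}}=\pi$. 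Finally, unfolding $a_{k-1}$ as the nested radical with $k-1$ square roots of $2$ reproduces exactly the left-hand side of \eqref{eq_8}, which finishes the argument.

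I do not anticipate a genuine obstacle: the only points needing care are the monotone-convergence argument for $a_k\to 2$ and the bookkeeping that turns the $2^{k+1}$ of the rearranged \eqref{eq_7} into the $2^{k}$ of \eqref{eq_8} through the factor $a_k\to 2$. An essentially equivalent alternative would substitute the Maclaurin series of $\arctan$ into \eqref{eq_7} and bound the tail by a geometric series in $x_k$, but packaging the estimate as $\arctan(t)/t\to 1$ is cleaner.
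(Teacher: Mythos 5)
Your proposal is correct and follows essentially the same route as the paper: it rests on the identity \eqref{eq_7}, the limit $a_k\to 2$ of the nested radicals, and the small-argument replacement of $\arctan$ by its argument via \eqref{eq_3}, with the factor $a_k\to 2$ absorbed to pass from $2^{k+1}$ to $2^k$. Your version merely makes the same steps slightly more explicit (monotone-bounded convergence of $\{a_k\}$ and the ratio $\arctan(x_k)/x_k\to 1$), which is a welcome tightening but not a different argument.
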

	
\begin{proof}
The following relation
$$
\mathop {\lim }\limits_{k \to \infty } {a_k} =\sqrt {2 + \sqrt {2 + \sqrt {2 + \sqrt {2 \cdots } } } }
$$
is a simplest Ramanujan's nested radical~\cite{Herschfeld1935, Servi2003, Levin2005, Kreminski2008, Borwein1991, Rao2005}. Denote $X$ as unknown, then from the~relation
$$
\sqrt {2 + \sqrt {2 + \sqrt {2 + \sqrt {2 \cdots } } } }  = X
$$
it immediately follows that
$$
\sqrt {2 + X}  = X
$$
or
$$
2 + X = {X^2}.
$$

Solving this equation yields two solutions for $X$ that are $ - 1$ and $2$. {Since}
$$
0<\sqrt{2}<\sqrt{2+\sqrt{2}}<\sqrt{2+\sqrt{2+\sqrt{2}}}\dots
$$
{all values of ${a_k}$ are non-negative and monotonically increase with increasing integer $k$.} Therefore, excluding $ - 1$ from consideration we end up with a solution
$$
\mathop {\lim }\limits_{k \to \infty } {a_k} = 2.
$$
From this limit it immediately follows that
$$
\mathop {\lim }\limits_{k \to \infty } \sqrt {2 - {a_{k - 1}}}  = \mathop {\lim }\limits_{k \to \infty } \sqrt {2 - {a_k}}  = 0.
$$
Consequently, the ratio $\sqrt {2 - {a_{k - 1}}} /{a_k} \to 0$ as $k \to \infty$.

The argument of the arctangent function in Equation~\eqref{eq_7} tends to zero as the integer $k$ increases. {Therefore, in~accordance with relation} \eqref{eq_3} {we can write}
$$
\arctan \left( {\frac{{\sqrt {2 - {a_{k - 1}}} }}{{{a_k}}}} \right) \to 0, \quad \frac{{\sqrt {2 - {a_{k - 1}}} }}{{{a_k}}} \to 0 \quad \rm{at} \quad k \to \infty,
$$
from which it follows that
\small
$$
\frac{\pi }{4} = \mathop {\lim }\limits_{k \to \infty } {2^{k - 1}}\arctan \left( {\frac{{\sqrt {2 - {a_{k - 1}}} }}{{{a_k}}}} \right) =\mathop {\lim }\limits_{k \to \infty } {2^{k - 1}} {\frac{{\sqrt {2 - {a_{k - 1}}} }}{{{a_k}}}} = \mathop {\lim }\limits_{k \to \infty } {2^{k - 2}}\sqrt {2 - {a_{k - 1}}}
$$
\normalsize
or
$$
\pi  = \mathop {\lim }\limits_{k \to \infty } {2^k}\sqrt {2 - {a_{k - 1}}}.
$$
\end{proof}

As we can see, this proof of Equation~\eqref{eq_8} is as easy as the one shown in~\cite{Herschfeld1935}. We also note that the following limit that we used in the proof
$$
\frac{\pi }{4} = \mathop {\lim }\limits_{k \to \infty } {2^{k - 1}}\arctan \left( {\frac{{\sqrt {2 - {a_{k - 1}}} }}{{{a_k}}}} \right)
$$
is valid since the identity \eqref{eq_7} remains valid at any arbitrarily large positive integer $k$.

It is also easy to prove the infinitude of the Machin-like formulas \eqref{eq_2} for $\pi$; substituting $x=1$ into the expansion series~\cite{Abrarov2021}
\[
\begin{aligned}
\arctan\left(x\right)&=\sum_{n=1}^N\arctan\left(\frac{N x}{N^2+\left(n-1\right)n x^2}\right) \\
&\Leftrightarrow\arctan\left(N x\right)=\sum_{n=1}^N\arctan\left(\frac{x}{1+\left(n-1\right)n x^2}\right)
\end{aligned}
\]
we obtain the following identity~\cite{OEIS2021}
\begin{equation}\label{eq_9}
\frac{\pi }{4} = \sum\limits_{n = 1}^N {\arctan } \left( {\frac{N}{{\left( {n - 1} \right)n + {N^2}}}} \right)
\end{equation}
leading to
\[
\frac{\pi }{4} = \arctan \left( 1 \right), \qquad N = 1,
\]
\[
\frac{\pi }{4} = \arctan \left( {\frac{1}{3}} \right) + \arctan \left( {\frac{1}{2}} \right), \qquad N = 2,
\]
\[
\frac{\pi }{4} = \arctan \left( {\frac{1}{5}} \right) + \arctan \left( {\frac{3}{{11}}} \right) + \arctan \left( {\frac{1}{3}} \right), \qquad N = 3,
\]
\[
\frac{\pi }{4} = \arctan \left( {\frac{1}{7}} \right) + \arctan \left( {\frac{2}{{11}}} \right) + \arctan \left( {\frac{2}{9}} \right) + \arctan \left( {\frac{1}{4}} \right), \qquad N = 4
\]
and so on. Although~the identity \eqref{eq_9} shows infinitude of the Machin-like formulas for $\pi$, its number of the terms increases with increasing $N$. We can also show a simple proof for infinitude of the two-term Machin-like formulas for $\pi$.

\begin{lemma}\label{lm_1}
{For real $\alpha$ and $\beta_1$, there are infinitely many two-terms Machin-like formulas for $\pi$ of~kind}
\begin{equation}\label{eq_10}
\frac{\pi }{4} = \alpha \arctan \left( {\frac{1}{{{\beta _1}}}} \right) + \arctan \left( {\frac{1}{{{\beta _2}}}} \right).
\end{equation}
\end{lemma}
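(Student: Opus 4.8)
The plan is to treat Equation~\eqref{eq_10} not as an identity to be verified for prescribed data, but as an equation that \emph{determines} $\beta_2$ once $\alpha$ and $\beta_1$ have been chosen, and then to count how many choices of $(\alpha,\beta_1)$ make the resulting $\beta_2$ a legitimate (finite, nonzero) real number. Concretely, I would isolate the second arctangent, apply $\tan(\cdot)$ to both sides, and use the tangent subtraction identity to obtain a closed-form expression for $\beta_2$; every admissible pair $(\alpha,\beta_1)$ then yields a bona fide instance of~\eqref{eq_10}, and since there are uncountably many admissible pairs, infinitude follows at once.

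The steps, in order, would be: first, rewrite \eqref{eq_10} as $\arctan(1/\beta_2)=\pi/4-\alpha\arctan(1/\beta_1)$ and abbreviate $\delta:=\alpha\arctan(1/\beta_1)$; second, provided $\pi/4-\delta$ lies in the principal interval $(-\pi/2,\pi/2)$, take tangents to get $1/\beta_2=\tan(\pi/4-\delta)$; third, expand via $\tan(\pi/4-\delta)=(1-\tan\delta)/(1+\tan\delta)$, so that $\beta_2=(1+\tan\delta)/(1-\tan\delta)$, which is a well-defined nonzero real as long as $\tan\delta\neq 1$ and $\tan\delta\neq-1$; fourth, observe that the constraints from the second and third steps rule out only an exceptional set of pairs $(\alpha,\beta_1)$ — for instance, freezing $\beta_1=1$ makes $\delta=\alpha\pi/4$, and then every $\alpha\in(-1,3)$ with $\alpha\neq 1$ works, an uncountable family — so that infinitely many valid triples $(\alpha,\beta_1,\beta_2)$ survive, each furnishing a two-term formula of kind~\eqref{eq_10}. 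A fully explicit alternative, requiring no new computation, is to invoke Equation~\eqref{eq_4} directly: it already displays an infinite family of such formulas, one for each integer $k\ge 1$, with $\alpha=2^{k-1}$ and $\beta_1=u_{1,k}$.

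The one point that genuinely needs attention — the main obstacle — is the reversibility of the passage through the tangent in the second step. Applying $\tan$ is harmless, but recovering $\arctan(1/\beta_2)=\pi/4-\delta$ (rather than that quantity modulo $\pi$) requires $\pi/4-\delta\in(-\pi/2,\pi/2)$, i.e.\ $-\pi/4<\delta<3\pi/4$. Thus the real content of the proof is to name an explicit infinite region of parameters on which this inequality holds together with $\tan\delta\neq\pm1$; once such a region is fixed (e.g.\ the $\beta_1=1$ slice above, or the family coming from~\eqref{eq_4}), the remaining manipulations are elementary trigonometry.
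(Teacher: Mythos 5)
Your construction is correct, and it is in fact tighter than the paper's own argument, though the underlying idea is the same: fix $(\alpha,\beta_1)$ and let the second arctangent absorb the residue $\pi/4-\alpha\arctan(1/\beta_1)$. The paper reaches Lemma \ref{lm_1} differently in presentation: it starts from Theorem \ref{th_1} (the one-term identities \eqref{eq_6}, instantiated by \eqref{eq_7}), perturbs $\gamma$ to a nearby $\zeta$, and introduces an error term $\varepsilon$ that is then declared to be $\arctan(1/\eta)$; the existence and admissibility of $\eta$ (i.e.\ that $\varepsilon$ lies in $(-\pi/2,\pi/2)$ and is nonzero, so that $1/\eta$ makes sense) is not checked, and the argument is written with ``$\approx$'' and ``$>>$'' rather than exact statements. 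Your route supplies exactly the missing bookkeeping: the closed form $\beta_2=(1+\tan\delta)/(1-\tan\delta)$ is the real-variable counterpart of the paper's complex-exponential formula \eqref{eq_12}, and your explicit branch condition $\pi/4-\delta\in(-\pi/2,\pi/2)$ together with $\tan\delta\neq\pm1$ is precisely the point the paper glosses over. What the paper's version buys is continuity with the rest of the text (it reuses Theorem \ref{th_1} and sets up \eqref{eq_12} and \eqref{eq_17}); what yours buys is a self-contained, rigorous proof with an explicit uncountable family, plus a one-line fallback via the $k$-indexed family \eqref{eq_4}/\eqref{eq_17}, which is essentially the paper's Theorem \ref{th_1} route made concrete.

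One small repair on your explicit slice $\beta_1=1$: at $\alpha=2$ you have $\delta=\pi/2$, so $\tan\delta$ is undefined and the expanded formula for $\beta_2$ fails, even though the direct relation $1/\beta_2=\tan(\pi/4-\delta)=-1$ still gives the valid identity $\pi/4=2\arctan(1)+\arctan(-1)$. Either work from $1/\beta_2=\tan(\pi/4-\delta)$ throughout, or exclude $\alpha=2$ along with $\alpha=1$; in both cases uncountably many admissible pairs remain, so the conclusion stands.
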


\begin{proof}
The Lemma \ref{lm_1} follows directly from the Theorem \ref{th_1} that implies infinitude of equations of kind \eqref{eq_6}. In~order to show this relation, we assume that $\alpha $ and $\gamma $ in Equation~\eqref{eq_6} are both positive numbers and represent  $\gamma$  as a sum $\gamma  = \zeta  + \delta$, where $\delta$ is any small number that can be chosen arbitrarily such that $\gamma  >> \left| \delta  \right|$. Thus, we can rewrite the Equation~\eqref{eq_6} in~form
$$
\frac{\pi }{4} = \alpha \arctan \left( {\frac{1}{{\zeta  + \delta }}} \right).
$$

From the inequality $\gamma  >> \left| \delta  \right|$ it follows that $\zeta  \approx \beta $. Therefore, we can approximate
$$
\arctan \left( {\frac{1}{\gamma }} \right) \approx \arctan \left( {\frac{1}{\zeta }} \right).
$$
By introducing now an error term $\varepsilon $, we can infer that
$$
\arctan \left( {\frac{1}{{\zeta  + \delta }}} \right) = \arctan \left( {\frac{1}{\zeta }} \right) + \varepsilon.
$$
Consequently, we have
$$
\frac{\pi }{4} = \alpha \arctan \left( {\frac{1}{\zeta }} \right) + \varepsilon
$$
or
\begin{equation}\label{eq_11}
\frac{\pi }{4} = \alpha \arctan \left( {\frac{1}{\zeta }} \right) + \arctan \left( {\frac{1}{\eta }} \right),
\end{equation}
where $\eta $ is defined such that
$$
\arctan \left( {\frac{1}{\eta }} \right) = \varepsilon.
$$

The Equation~\eqref{eq_11} is of the same kind as that of given by Equation~\eqref{eq_10}. This completes the proof since for any equation of kind \eqref{eq_6} we can always construct an equation of kind~\eqref{eq_10}.
\end{proof}

When $\alpha $ and ${\beta _1}$ in Equation~\eqref{eq_10} are known, then the unknown value ${\beta _2}$  is given by
\begin{equation}\label{eq_12}
{\beta _2} = \frac{2}{{{{\left( {\left( {{\beta _1} + i} \right)/\left( {{\beta _1} - i} \right)} \right)}^\alpha } - i}} - i.
\end{equation}
The derivation of Equation~\eqref{eq_12} can be shown from the following identity
$$
\arctan \left( {\frac{1}{x}} \right) = \frac{1}{{2i}}\ln \left( {\frac{{x + i}}{{x - i}}} \right)
$$
Thus, substituting this identity into Equation~\eqref{eq_10} results in
$$
\frac{\pi }{4} = \frac{\alpha }{{2i}}\ln \left( {\frac{{{\beta _1} + i}}{{{\beta _1} - i}}} \right) + \frac{1}{{2i}}\ln \left( {\frac{{{\beta _2} + i}}{{{\beta _2} - i}}} \right)
$$
or
$$
\frac{\pi }{2}i = \ln \left( {{{\left( {\frac{{{\beta _1} + i}}{{{\beta _1} - i}}} \right)}^\alpha }\frac{{{\beta _2} + i}}{{{\beta _2} - i}}} \right).
$$
Exponentiation on both sides leads to
\begin{equation}\label{eq_13}
{\left( {\frac{{{\beta _1} + i}}{{{\beta _1} - i}}} \right)^\alpha }\frac{{{\beta _2} + i}}{{{\beta _2} - i}} = i.
\end{equation}
{Solving this with respect to the constant $\beta_2$ leads to Equation} \eqref{eq_12}.

\begin{theorem}\label{th_3}
If in Equation~\eqref{eq_10} $\forall k \ge 2$ the multiplier $\alpha  = {2^{k - 1}}$ and ${\beta _1}$ is a rational number greater than $1$, then ${\beta _2}$ is also a rational number.
\end{theorem}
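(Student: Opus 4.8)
The plan is to obtain $\beta_2$ from the closed form \eqref{eq_12} and to carry out every step of the computation inside the field of Gaussian rationals $\mathbb{Q}(i)$, so that rationality is preserved automatically. First I would set $z=(\beta_1+i)/(\beta_1-i)$; since $\beta_1\in\mathbb{Q}$, clearing the denominator gives $z=(\beta_1+i)^2/(\beta_1^2+1)\in\mathbb{Q}(i)$, and since $\beta_1$ is real one has $z\bar z=1$, i.e. $|z|=1$. Because $\alpha=2^{k-1}$ is a positive integer, $z^{\alpha}\in\mathbb{Q}(i)$ as well, so I may write $z^{\alpha}=p+qi$ with $p,q\in\mathbb{Q}$ and $p^2+q^2=|z^{\alpha}|^2=1$.

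The next step is to substitute $z^{\alpha}=p+qi$ into \eqref{eq_12}, which reads $\beta_2=2/\bigl((p+qi)-i\bigr)-i=2/\bigl(p+(q-1)i\bigr)-i$. Rationalizing the denominator and using the identity $p^2+q^2=1$, the modulus squared of the denominator collapses to $p^2+(q-1)^2=2(1-q)$, and a short simplification gives
$$\frac{2}{p+(q-1)i}=\frac{p-(q-1)i}{1-q}=\frac{p}{1-q}+i,$$
so that $\beta_2=p/(1-q)$. This exhibits $\beta_2$ as a quotient of two rational numbers, hence rational (and, as a by-product, also real), which is exactly the assertion.

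The one point that needs care --- and the only real obstacle --- is showing that the denominator $1-q$ is nonzero. If it were zero, then $p=0$ by $p^2+q^2=1$, so $z^{\alpha}=i$ and hence $z^{2^{k+1}}=(z^{\alpha})^4=1$; thus $z$ would be a root of unity in $\mathbb{Q}(i)$, forcing $z\in\{1,-1,i,-i\}$. I would then rule each of these out by a one-line check of the equation $(\beta_1+i)/(\beta_1-i)=z$, which yields in turn $i=-i$, $\beta_1=0$, $\beta_1=1$, and $\beta_1=-1$ --- all impossible for a rational $\beta_1>1$. It is precisely at this last step that the hypotheses ``$\beta_1$ rational'' and ``$\beta_1>1$'' (besides the integrality of $\alpha$) are used; the remainder is routine arithmetic in $\mathbb{Q}(i)$.
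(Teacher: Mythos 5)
Your proof is correct, and it reaches the paper's key formula $\beta_2=\sigma_k/(1-\tau_k)$ (your $p/(1-q)$) by a genuinely different and in fact tighter route. The paper also works with the real and imaginary parts of $\left((\beta_1+i)/(\beta_1-i)\right)^{2^{k-1}}$, but it establishes their rationality via the explicit repeated-squaring iteration \eqref{eq_14} (which it needs later as a computational scheme), and it proves that the imaginary part of \eqref{eq_15} vanishes only indirectly, by invoking de Moivre's formula and the trigonometric representation \eqref{eq_16} of $\beta_2$ to argue that $\beta_2$ is real. You replace that detour with pure arithmetic in $\mathbb{Q}(i)$: closure under multiplication gives $p,q\in\mathbb{Q}$ (and works for any positive integer exponent, not just powers of $2$), and the identity $p^2+q^2=1$ makes the imaginary part cancel algebraically, with no trigonometry at all. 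Moreover, you address a point the paper silently ignores: both $\sigma_k/(1-\tau_k)$ and the denominator $1-\sin(\cdot)$ in \eqref{eq_16} could a priori vanish, and your root-of-unity argument (or, even more directly for $k\ge 2$: $z^{2^{k-1}}$ is a square in $\mathbb{Q}(i)$ while $i$ is not, since $(a+bi)^2=i$ forces $2a^2=\pm1$) rules this out using exactly the hypotheses $\beta_1\in\mathbb{Q}$, $\beta_1>1$. The only step worth a word of justification is the standard fact that the roots of unity in $\mathbb{Q}(i)$ are precisely $\pm1,\pm i$ (a primitive $n$-th root lies in a quadratic field only for $\varphi(n)\le 2$, and the cube roots involve $\sqrt{-3}\notin\mathbb{Q}(i)$); with that noted, your argument is complete and somewhat more rigorous than the published one, whose trade-off is that the iteration \eqref{eq_14} and the trigonometric form \eqref{eq_16} are reused later in the paper. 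Incidentally, your approach also avoids the paper's slip in asserting $(\beta_1^2-1)/(\beta_1^2+1)>1$ (it is merely positive), which the paper needs only to rewrite the principal argument as an arctangent.
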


\begin{proof}
Define ${\sigma_1}$ and ${\tau_1}$ such that
$$
{\sigma_1} = {\mathop{\rm Re}\nolimits} \left[ {\frac{{{\beta _1} + i}}{{{\beta _1} - i}}} \right] = \frac{{\beta _{_1}^2 - 1}}{{\beta _{_1}^2 + 1}}
$$
and
$$
{\tau_1} = {\mathop{\rm Im}\nolimits} \left[ {\frac{{{\beta _1} + i}}{{{\beta _1} - i}}} \right] = \frac{{2{\beta _1}}}{{\beta _{_1}^2 + 1}}.
$$

Then, it is not difficult to see by induction that
\[
\begin{aligned}
{\left( {{\sigma_1} + i{\tau_1}} \right)^{2k - 1}} &= \overbrace {{{\left( {{{\left( {{{\left( {{{\left( {{\sigma_1} + i{\tau_1}} \right)}^2}} \right)}^2}} \right)}^{2 \cdots }}} \right)}^2}}^{k - 1\,{\rm{powers}}\,\,{\rm{of}}\,2} = \overbrace {{{\left( {{{\left( {{{\left( {{{\left( {{\sigma_2} + i{\tau_2}} \right)}^2}} \right)}^2}} \right)}^{2 \cdots }}} \right)}^2}}^{k - 2\,{\rm{powers}}\,\,{\rm{of}}\,2}\\
&= \overbrace {{{\left( {{{\left( {{{\left( {{{\left( {{\sigma_3} + i{\tau_3}} \right)}^2}} \right)}^2}} \right)}^{2 \cdots }}} \right)}^2}}^{k - 3\,{\rm{powers}}\,\,{\rm{of}}\,2} =  \cdots  = \overbrace {{{\left( {{{\left( {{{\left( {{{\left( {{\sigma_{n}} + i{\tau_{n}}} \right)}^2}} \right)}^2}} \right)}^{2 \cdots }}} \right)}^2}}^{ k-n\,{\rm{powers}}\,\,{\rm{of}}\,2} =  \cdots \\
&= {\left( {{{\left( {{\sigma_{k - 2}} + i{\tau_{k - 2}}} \right)}^2}} \right)^2} = {\left( {{\sigma_{k - 1}} + i{\tau_{k - 1}}} \right)^2} = {\sigma_k} + i{\tau_k},
\end{aligned}
\]
where by the following two-step iteration we have
\begin{equation}\label{eq_14}
\left\{ \begin{aligned}
{\sigma_n} &= \sigma_{n - 1}^2 - \tau_{n - 1}^2\\
{\tau_n} &= 2{\sigma_{n - 1}}{\tau_{n - 1}}, \qquad n = \left\{ {2,3,4, \ldots ,k} \right\}.
\end{aligned} \right.
\end{equation}
Consequently, Equation \eqref{eq_12} can be rewritten in form
\begin{equation}\label{eq_15}
{\beta_2} = \frac{2}{{{\sigma_k} + i{\tau_k} - i}} - i = \frac{{2{\sigma_k}}}{{\sigma_k^2 + {{\left( {{\tau_k} - 1} \right)}^2}}} + i\left( {\frac{{2\left( {1 - {\tau_k}} \right)}}{{\sigma_k^2 + {{\left( {{\tau_k} - 1} \right)}^2}}} - 1} \right).
\end{equation}

Applying the de Moivre's formula we can separate the complex number ${\left( {{\sigma_1} + i{\tau_1}} \right)^{2k - 1}}$ into real and imaginary parts in polar form as
\footnotesize
\[
{\left( {{\sigma_1} + i{\tau_1}} \right)^{2k - 1}} = {\left( {\sigma_1^2 + \tau_1^2} \right)^{2k - 2}}\left( {\cos \left( {{2^{k - 1}}{\rm{Arg}}\left( {{\sigma_1} + i{\tau_1}} \right)} \right) + i\sin \left( {{2^{k - 1}}{\rm{Arg}}\left( {{\sigma_1} + i{\tau_1}} \right)} \right)} \right).
\]
\normalsize
Substituting this expression into the Equation~\eqref{eq_15} after some trivial rearrangement we get
\[
{\beta _2} = \frac{{\cos \left( {{2^{k - 1}}{\rm{Arg}}\left( {\frac{{{\beta _1} + i}}{{{\beta _1} - i}}} \right)} \right)}}{{1 - \sin \left( {{2^{k - 1}}{\rm{Arg}}\left( {\frac{{{\beta _1} + i}}{{{\beta _1} - i}}} \right)} \right)}}.
\]

Since ${\beta_1} > 1$, then
\[
{\mathop{\rm Re}\nolimits} \left[ {\frac{{{\beta _1} + i}}{{{\beta _1} - i}}} \right] = \frac{{\beta _1^2 - 1}}{{\beta _1^2 + 1}} > 1
\]
and, therefore, the~principal value argument can be replaced by the arctangent function as~follows
\[
\begin{aligned}
{\rm{Arg}}\left( {\frac{{{\beta _1} + i}}{{{\beta _1} - i}}} \right) &= {\rm{Arg}}\left( {\frac{{\beta _1^2 - 1}}{{\beta _1^2 + 1}} + i\frac{{2{\beta _1}}}{{\beta _1^2 + 1}}} \right)\\
&= \arctan \left( {\left( {\frac{{2{\beta _1}}}{{\beta _1^2 + 1}}} \right)/\left( {\frac{{\beta _1^2 - 1}}{{\beta _1^2 + 1}}} \right)} \right) = \arctan \left( {\frac{{2{\beta _1}}}{{\beta _1^2 - 1}}} \right).
\end{aligned}
\]
Consequently, we can write
\begin{equation}\label{eq_16}
{\beta _2} = \frac{{\cos \left( {{2^{k - 1}}\arctan \left( {\frac{{2{\beta _1}}}{{\beta _1^2 - 1}}} \right)} \right)}}{{1 - \sin \left( {{2^{k - 1}}\arctan \left( {\frac{{2{\beta _1}}}{{\beta _1^2 - 1}}} \right)} \right)}}.
\end{equation}

As we can see from this equation, the constant $\beta_2 \in \mathbb{R}$. This signifies that the imaginary part of Equation~\eqref{eq_15} must be equal to zero. Therefore, from~Equation~\eqref{eq_15} we get
\[
\frac{{2\left( {1 - {\tau_k}} \right)}}{{\sigma_k^2 + {{\left( {{\tau_k} - 1} \right)}^2}}} - 1 = 0 \Leftrightarrow {\beta _2} = \frac{{{\sigma_k}}}{{1 - {\tau_k}}}.
\]
The values ${\sigma_k}$ and ${\tau_k}$ are rational since, according to two-step iteration \eqref{eq_14} all values ${\sigma_n}$ and ${\tau_n}$ at any intermediate steps of iterations are rational. Therefore, the~constant ${\beta _2}$ must be a rational number.
\end{proof}

Consider two examples. Choosing  $\alpha  = 16$ and ${\beta _1} = 509/25 = 20.36$ and substituting these two values into Equation~\eqref{eq_12} we can find that
$$
{\beta _2} = \frac{{114322283895863787286174872158832679853761}}{{19955894848381168459034791030978450561}}
$$

The following Mathematica code:
\vspace{-0.25cm}
\begin{shaded}
\begin{verbatim}
\[Alpha]=16;
\[Beta]1=509/25;
\[Beta]2=114322283895863787286174872158832679853761/
19955894848381168459034791030978450561;

Pi/4==16*ArcTan[1/\[Beta]1]+ArcTan[1/\[Beta]2]
\end{verbatim}
\end{shaded}
\vspace{-0.25cm}
\noindent returns \texttt{True}. Choosing now, for~example, $\alpha  = 16$ and ${\beta _1} = 407/20 = 20.35$ and substituting these two values into Equation~\eqref{eq_12} again, we can get a negative value
$$
{\beta _2} =  - \frac{{817344423776293722798294452010774302554561}}{{172199208235943812365929049219262848959}}.
$$
The following Mathematica code:
\vspace{-0.25cm}
\begin{shaded}
\begin{verbatim}
\[Alpha]=16;
\[Beta]1=407/20;
\[Beta]2=-817344423776293722798294452010774302554561/
172199208235943812365929049219262848959;

Pi/4==16*ArcTan[1/\[Beta]1]+ArcTan[1/\[Beta]2]
\end{verbatim}
\end{shaded}
\vspace{-0.25cm}
\noindent also validates the two-term Machin-like formula for $\pi$ by returning \texttt{True}. The~different signs in ${\beta _2}$ follow from the chosen integer $\alpha = 16 = {2^{k - 1}}$ at $k = 5$. When we take
\[
\begin{aligned}
{\beta _1} = \frac{{{a_5}}}{{\sqrt {2 - {a_4}} }} &= \frac{{\sqrt {2 + \sqrt {2 + \sqrt {2 + \sqrt {2 + \sqrt 2 } } } } }}{{\sqrt {2 - \sqrt {2 + \sqrt {2 + \sqrt {2 + \sqrt 2 } } } } }} \\
&= 20.355467624987188 \ldots \quad \left( {{\rm{irrational}}} \right),
\end{aligned}
\]
then the value of  ${\beta _2} = 0$. However, if~${\beta _1} < {a_k}/\sqrt {2 - {a_{k - 1}}} $, then ${\beta _2} < 0$ and vice~versa if ${\beta _1} > {a_k}/\sqrt {2 - {a_{k - 1}}} $, then ${\beta _2} > 0$. The~examples above correspond to the \linebreak following~inequality
$$
\underbrace {\frac{{407}}{{20}}}_{{\rm{case}}\,{\beta _2} < 0} < \underbrace {\frac{{\sqrt {2 + \sqrt {2 + \sqrt {2 + \sqrt {2 + \sqrt 2 } } } } }}{{\sqrt {2 - \sqrt {2 + \sqrt {2 + \sqrt {2 + \sqrt 2 } } } } }}}_{{\rm{case}}\,{\beta _2} = 0} < \underbrace {\frac{{509}}{{25}}}_{{\rm{case}}\,{\beta _2} > 0}.
$$

Chien-Lih proposed a method showing how to reduce the Lehmer's measure by using the Euler's-type identity in an iteration for generating the two-term Machin-like formulas for $\pi$. However, our method of generating the two-term Machin-like formula for $\pi$ based on the two-step iteration \eqref{eq_14} is much easier than the method proposed by Chien-Lih in the work~\cite{Chien-Lih2004}.

\section{Derivation}

Using Equation~\eqref{eq_16} we can rewrite the Equation~\eqref{eq_10} as
\[
\frac{\pi }{4} = {2^{k - 1}}\arctan \left( {\frac{1}{{{\beta _1}}}} \right) + \arctan\left(\frac{1-\sin\left(2^{k-1}\arctan\left(\frac{2\beta_1}{\beta_1^2-1}\right)\right)}{\cos\left(2^{k-1}\arctan\left(\frac{2\beta_1}{\beta_1^2-1}\right)\right)}\right).
\]

In general, the~constant ${\beta _1}$ may be either rational or irrational number. However, it is more convenient to apply notation ${u_{1,k}}$ that is defined by Equation~\eqref{eq_5} instead of ${\beta _1}$. Such a notation is to emphasize that the constant ${u_{1,k}}$ is an integer dependent upon on $k$. Thus, with~this notation the two-term Machin-like formula for $\pi$ can be represented as
\begin{equation}\label{eq_17}
\frac{\pi }{4} = {2^{k - 1}}\arctan \left( {\frac{1}{{{u_{1,k}}}}} \right) + \arctan \left( {\frac{1}{{{u_{2,k}}}}} \right),
\end{equation}
where in accordance with Equation~\eqref{eq_12} we have now
\begin{equation}\label{eq_18}
{u_{2,k}} = \frac{2}{{{{\left( {\left( {{u_{1,k}} + i} \right)/\left( {{u_{1,k}} - i} \right)} \right)}^{{2^{k - 1}}}} - i}} - i.
\end{equation}

It is interesting to note that by taking $k = 3$, we get
$$
{u_{1,3}} = \left\lfloor {\frac{{{a_3}}}{{\sqrt {2 - {a_2}} }}} \right\rfloor  = \left\lfloor {\frac{{\sqrt {2 + \sqrt {2 + \sqrt 2 } } }}{{\sqrt {2 - \sqrt {2 + \sqrt 2 } } }}} \right\rfloor  = 5.
$$
Substituting ${u_{1,3}} = 5$ into Equation~\eqref{eq_17} we obtain ${u_{2,3}} =  - 239.$ Considering that \linebreak ${2^{k - 1}} = {2^2} = 4$ and substituting these two constants into Equation~\eqref{eq_10} we derive an original Machin-like formula \eqref{eq_1} for $\pi$.

Since the value ${2^{k - 1}}$ rapidly increases with increasing $k$, application of the \linebreak Equation~\eqref{eq_18} if not effective to compute the second constant ${u_{2,k}}$. However, the~two-step iteration \eqref{eq_14} perfectly resolves this issue. Specifically, implying that the initial values for the two-step iteration \eqref{eq_14}  are
\[
{\sigma_1} = {\mathop{\rm Re}\nolimits} \left[ {\frac{{{u_{1,k}} + i}}{{{u_{1,k}} - i}}} \right] = \frac{{u_{_{1,k}}^2 - 1}}{{u_{_{1,k}}^2 + 1}}
\]
and
\[
{\tau_1} = {\mathop{\rm Im}\nolimits} \left[ {\frac{{{u_{1,k}} + i}}{{{u_{1,k}} - i}}} \right] = \frac{{2{u_{1,k}}}}{{u_{_{1,k}}^2 + 1}},
\]
we can find the second constant as
\begin{equation}\label{eq_19}
{u_{2,k}} = \frac{{{\sigma_k}}}{{1 - {\tau_k}}}.
\end{equation}

We can derive again the original Machin-like formula \eqref{eq_1} for $\pi$ by using the two-step iteration \eqref{eq_14} at $k = 3$. This leads to the following
\[
{\sigma_1} = \frac{{u_{_{1,3}}^2 - 1}}{{u_{_{1,3}}^2 + 1}} = \frac{{24}}{{26}}, \qquad {\tau_1} = \frac{{2{u_{1,3}}}}{{u_{_{1,3}}^2 + 1}} = \frac{{10}}{{26}},
\]
\[
{\sigma_2} = \sigma_1^2 - \tau_1^2 = \frac{{119}}{{169}}, \qquad {\tau_2} = 2{\sigma_1}{\tau_1} = \frac{{120}}{{169}},
\]
\[
{\sigma_3} = \sigma_2^2 - \tau_2^2 =  - \frac{{239}}{{28561}}, \qquad {\tau_3} = 2{\sigma_2}{\tau_2} = \frac{{28560}}{{28561}}.
\]
Finally, using Equation \eqref{eq_19} we can find the second constant to be
\[
{u_{2,3}} = \frac{{{\sigma_3}}}{{1 - {\tau_3}}} =  - \frac{{239/28561}}{{1 - 28560/28561}} =  -239.
\]

It should be noted that the second constant ${u_{2,k}}$ is an integer only at $k = 2$ and $k = 3$. At~$k>3$ it is not an integer but a rational~number.

The next example is $k = 6$. The~first constant is an integer given by
$$
{u_{1,6}} = \left\lfloor {\frac{{{a_6}}}{{\sqrt {2 - {a_5}} }}} \right\rfloor  = \left\lfloor {\frac{{\sqrt {2 + \sqrt {2 + \sqrt {2 + \sqrt {2 + \sqrt {2 + \sqrt 2 } } } } } }}{{\sqrt {2 - \sqrt {2 + \sqrt {2 + \sqrt {2 + \sqrt {2 + \sqrt 2 } } } } } }}} \right\rfloor  = 40.
$$

The second constant $u_{2,6}$ is a rational number that can be computed either by Equation~\eqref{eq_18} or, more efficiently, by~two-step iteration \eqref{eq_14}
$$
{u_{2,6}} =  - \frac{{{\rm{2634699316100146880926635665506082395762836079845121}}}}{{{\rm{38035138859000075702655846657186322249216830232319}}}}.
$$

The following Mathematica code validates the two-term Machin-like formula for $\pi$ at $k = 6$:
\vspace{-0.25cm}
\begin{shaded}
\begin{verbatim}
k=6;
\[Beta]1=40;
\[Beta]2=-2634699316100146880926635665506082395762836079845121/
38035138859000075702655846657186322249216830232319;

Pi/4==2^(k-1)*ArcTan[1/\[Beta]1]+ArcTan[1/\[Beta]2]
\end{verbatim}
\end{shaded}
\vspace{-0.25cm}
\noindent by returning {\ttfamily{True}}.

Alternatively, the~second constant can also be found by using the following identity in trigonometric form
\begin{equation}\label{eq_20}
{u_{2,k}} = \frac{{\cos \left( {{2^{k - 1}}\arctan \left( {\frac{{2{u_{1,k}}}}{{u_{1,k}^2 - 1}}} \right)} \right)}}{{1 - \sin \left( {{2^{k - 1}}\arctan \left( {\frac{{2{u_{1,k}}}}{{u_{1,k}^2 - 1}}} \right)} \right)}}
\end{equation}
that follows from Equation~\eqref{eq_16}. It should be noted that the constant ${u_{2,k}}$ must be a rational number as it has been shown by Theorem \ref{th_3}.

We can see that from Equation \eqref{eq_5} it follows that
$$
{u_{1,k}} << u_{1,k}^2, \qquad k >> 1.
$$
Consequently, the following ratio can be simplified as given by
$$
\frac{{2{u_{1,k}}}}{{u_{1,k}^2 - 1}} \approx \frac{2}{{{u_{1,k}}}}.
$$

Replacing the arguments of the sine and cosine functions in Equation~\eqref{eq_20}, we can approximate the two-term Machin-like formula \eqref{eq_17} for $\pi$ as
$$
\frac{\pi }{4} \approx {2^{k - 1}}\arctan \left( {\frac{1}{{{u_{1,k}}}}} \right) + \arctan \left( {\frac{{1 - \sin \left( {{2^k}/{u_{1,k}}} \right)}}{{\cos \left( {{2^k}/{u_{1,k}}} \right)}}} \right).
$$
Using the identities for the double angle
$$
\sin \left( x \right) = \frac{{2\tan \left( x/2 \right)}}{{1 + {{\tan }^2}\left( x/2 \right)}},
$$
$$
\cos \left( x \right) = \frac{{1 - {{\tan }^2}\left( x/2 \right)}}{{1 + {{\tan }^2}\left( x/2 \right)}},
$$
after some trivial rearrangements we obtain
\begin{equation}\label{eq_21}
\frac{\pi }{4} \approx {2^{k - 1}}\arctan \left( {\frac{1}{{{u_{1,k}}}}} \right) + \arctan \left( {\frac{{1 - \tan \left( {{2^{k - 1}}/{u_{1,k}}} \right)}}{{1 + {{\tan }^2}\left( {{2^{k - 1}}/{u_{1,k}}} \right)}}} \right).
\end{equation}

Recently, it has been noticed in publication~\cite{WolframCloud} that the ratio ${2^{k - 1}}/{u_{1,k}}$ approximates $\pi/4$ reasonably well when integer $k=1000$. The~following theorem shows why accuracy of this ratio improves with increasing $k$.

\begin{theorem}\label{th_4}
There is a limit
\begin{equation}\label{eq_22}
\mathop {\lim }\limits_{k \to \infty } \frac{{{2^{k - 1}}}}{{{u_{1,k}}}} = \frac{\pi }{4}.
\end{equation}
\end{theorem}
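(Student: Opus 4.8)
The plan is to replace the integer $u_{1,k}$ by the real number $\gamma_k := a_k/\sqrt{2-a_{k-1}}$ of which it is the floor, sandwich $2^{k-1}/u_{1,k}$ between $2^{k-1}/\gamma_k$ and $2^{k-1}/(\gamma_k-1)$, show that both of these bounds converge to $\pi/4$, and conclude by the squeeze theorem. This is natural here because Equation~\eqref{eq_7} gives an \emph{exact} closed form, $\pi/4 = 2^{k-1}\arctan(1/\gamma_k)$, valid for every $k\ge 1$.

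First I would record the elementary facts about $\gamma_k$ that are already contained in the proof of Theorem~\ref{th_2}: the sequence $\{a_k\}$ is strictly increasing with $a_0=0$ and $a_k\to 2$, so $0 < a_{k-1} < 2$ for every $k\ge 1$; hence $\sqrt{2-a_{k-1}}>0$ is well defined with $\sqrt{2-a_{k-1}}\to 0^{+}$, while $a_k\to 2$. Consequently $\gamma_k\to+\infty$, so in particular $\gamma_k > 1$ for all sufficiently large $k$, which makes $\gamma_k-1>0$. For such $k$ the floor inequality $\gamma_k-1 < u_{1,k}\le\gamma_k$ (together with positivity of $2^{k-1}$) gives
\[
\frac{2^{k-1}}{\gamma_k}\;\le\;\frac{2^{k-1}}{u_{1,k}}\;<\;\frac{2^{k-1}}{\gamma_k-1}.
\]

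Next I would evaluate the two bounds. For the lower one, Equation~\eqref{eq_7} yields $\pi/4 = 2^{k-1}\arctan(1/\gamma_k)$, hence
\[
\frac{2^{k-1}}{\gamma_k}=\frac{\pi}{4}\cdot\frac{1/\gamma_k}{\arctan\left(1/\gamma_k\right)} .
\]
Since $1/\gamma_k\to 0$, relation~\eqref{eq_3} (equivalently $\arctan(x)/x\to 1$ as $x\to 0$) gives $\dfrac{1/\gamma_k}{\arctan(1/\gamma_k)}\to 1$, so $2^{k-1}/\gamma_k\to\pi/4$. For the upper one, write $\dfrac{2^{k-1}}{\gamma_k-1}=\dfrac{2^{k-1}}{\gamma_k}\cdot\dfrac{1}{1-1/\gamma_k}$; because $1/\gamma_k\to 0$ the last factor tends to $1$, so $2^{k-1}/(\gamma_k-1)\to\pi/4$ as well. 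The squeeze theorem then gives $\lim_{k\to\infty}2^{k-1}/u_{1,k}=\pi/4$, i.e. Equation~\eqref{eq_22}.

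The only genuinely delicate point — and the step I would take most care over — is the justification that $\gamma_k\to+\infty$, which is what makes $\gamma_k-1$ eventually positive and $1/\gamma_k\to 0$; this rests on the strict bounds $0<a_{k-1}<2$ together with $a_k\to 2$, all of which are already part of the monotonicity argument used to prove Theorem~\ref{th_2}. Everything else is the squeeze theorem and the standard limit $\arctan(x)/x\to 1$ as $x\to 0$, so no substantial new computation is needed.
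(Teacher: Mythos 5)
Your proposal is correct and follows essentially the same route as the paper: both replace $u_{1,k}$ by the exact quantity $a_k/\sqrt{2-a_{k-1}}$ using the floor-function property, and then evaluate $2^{k-1}\bigl(\sqrt{2-a_{k-1}}/a_k\bigr)\to\pi/4$ via Equation~\eqref{eq_7} together with relation~\eqref{eq_3} and the limits from Theorem~\ref{th_2}. The only difference is presentational — you package the negligibility of the floor correction as a squeeze between $2^{k-1}/\gamma_k$ and $2^{k-1}/(\gamma_k-1)$, explicitly noting $\gamma_k\to\infty$, whereas the paper argues that the ratio $\gamma_k/u_{1,k}\to 1$ using the bounded fractional part; your handling of that point is, if anything, slightly more careful.
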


\begin{proof}
By definition of the floor function we have
$$
\frac{{{a_k}}}{{\sqrt {2 - {a_{k - 1}}} }} = \left\lfloor {\frac{{{a_k}}}{{\sqrt {2 - {a_{k - 1}}} }}} \right\rfloor  + {\rm{frac}}\left( {\frac{{{a_k}}}{{\sqrt {2 - {a_{k - 1}}} }}} \right),
$$
where {by definition the fractional part cannot be smaller than zero and greater than or equal to unity}
$$
0 \ge {\rm{frac}}\left( {\frac{{{a_k}}}{{\sqrt {2 - {a_{k - 1}}} }}} \right) < 1.
$$
Therefore, we can write
\[
\begin{aligned}
\mathop {\lim }\limits_{k \to \infty } \frac{{{a_k}/\sqrt {2 - {a_{k - 1}}} }}{{{u_{1,k}}}} &= \mathop {\lim }\limits_{k \to \infty } \frac{{{a_k}/\sqrt {2 - {a_{k - 1}}} }}{{\left\lfloor {{a_k}/\sqrt {2 - {a_{k - 1}}} } \right\rfloor }}\\
&= \mathop {\lim }\limits_{k \to \infty } \frac{{\left\lfloor {{a_k}/\sqrt {2 - {a_{k - 1}}} } \right\rfloor  + {\rm{frac}}\left( {{a_k}/\sqrt {2 - {a_{k - 1}}} } \right)}}{{\left\lfloor {{a_k}/\sqrt {2 - {a_{k - 1}}} } \right\rfloor }} \\
&= 1 + \mathop {\lim }\limits_{k \to \infty } \frac{{{\rm{frac}}\left( {{a_k}/\sqrt {2 - {a_{k - 1}}} } \right)}}{{\left\lfloor {{a_k}/\sqrt {2 - {a_{k - 1}}} } \right\rfloor }}=1.
\end{aligned}
\]
Since the fractional part cannot be smaller than $0$ and greater than $1$ we can conclude that
$$
\mathop {\lim }\limits_{k \to \infty } \frac{{{a_k}/\sqrt {2 - {a_{k - 1}}} }}{{{u_{1,k}}}} = 1.
$$
Consequently, we can infer that
$$
\mathop {\lim }\limits_{k \to \infty } {2^{k - 1}}\frac{{\sqrt {2 - {a_{k - 1}}} }}{{{a_k}}} = \mathop {\lim }\limits_{k \to \infty } \frac{{{2^{k - 1}}}}{{{u_{1,k}}}}.
$$

From Theorem \ref{th_2} we know that
$$
\mathop {\lim }\limits_{k \to \infty } \frac{{\sqrt {2 - {a_{k - 1}}} }}{{{a_k}}} = \frac{{\mathop {\lim }\limits_{k \to \infty } \sqrt {2 - {a_{k - 1}}} }}{{\mathop {\lim }\limits_{k \to \infty } {a_k}}} = \frac{0}{2} = 0.
$$
Consequently, from~the relation \eqref{eq_3} we get
$$
\mathop {\lim }\limits_{k \to \infty } {2^{k - 1}}\frac{{\sqrt {2 - {a_{k - 1}}} }}{{{a_k}}} = \mathop {\lim }\limits_{k \to \infty } {2^{k - 1}}\arctan \left( {\frac{{\sqrt {2 - {a_{k - 1}}} }}{{{a_k}}}} \right) = \frac{\pi }{4}
$$
and the limit \eqref{eq_22} follows.
\end{proof}

\begin{lemma}\label{lm_2}
There is a limit such that
\[
\lim_{k\to\infty}\frac{1}{u_{2,k}}=0.
\]
\end{lemma}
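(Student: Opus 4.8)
The plan is to solve the two-term formula \eqref{eq_17} for $\arctan(1/u_{2,k})$ and show that it tends to zero; the claim then follows by continuity of the tangent. Rearranging \eqref{eq_17} gives
$$
\arctan\left(\frac{1}{u_{2,k}}\right) = \frac{\pi}{4} - 2^{k-1}\arctan\left(\frac{1}{u_{1,k}}\right),
$$
so it suffices to prove that $2^{k-1}\arctan(1/u_{1,k}) \to \pi/4$ as $k\to\infty$. Note first that $1/u_{2,k}$ is well defined for all large $k$: by Theorem \ref{th_3} the number $u_{2,k}$ is rational, and it vanishes only at the irrational comparison value $a_k/\sqrt{2-a_{k-1}}$, not at the integer $u_{1,k}$.

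The key input is Theorem \ref{th_4}, which already gives $2^{k-1}/u_{1,k} \to \pi/4$; since $2^{k-1}\to\infty$, this forces $u_{1,k}\to\infty$ as well, at order $2^{k-1}$. Using the quantitative form of relation \eqref{eq_3}, namely $|\arctan x - x|\le |x|^3/3$, which applies since $1/u_{1,k}\le 1$ for all large $k$, I would estimate
$$
\left|\,2^{k-1}\arctan\left(\frac{1}{u_{1,k}}\right) - \frac{2^{k-1}}{u_{1,k}}\,\right| \le \frac{1}{3}\cdot\frac{2^{k-1}}{u_{1,k}}\cdot\frac{1}{u_{1,k}^{2}}.
$$
On the right-hand side the factor $2^{k-1}/u_{1,k}$ is bounded (it converges to $\pi/4$) while $1/u_{1,k}^{2}\to 0$, so the whole bound tends to $0$. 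Combining this with Theorem \ref{th_4} yields $2^{k-1}\arctan(1/u_{1,k}) \to \pi/4$, hence $\arctan(1/u_{2,k}) \to 0$.

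Finally, applying the continuous function $\tan$ and using $\tan(\arctan y)=y$ together with $\tan 0=0$, from $\arctan(1/u_{2,k})\to 0$ we conclude $1/u_{2,k}\to 0$, which is the desired limit.

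The main obstacle is the middle step: one must verify that the cubic error term in the arctangent expansion, once multiplied by the large factor $2^{k-1}$, still vanishes in the limit. This is precisely where Theorem \ref{th_4} is indispensable, since it pins $u_{1,k}$ to order $2^{k-1}$ and thereby rewrites $2^{k-1}/u_{1,k}^{3}$ as $(2^{k-1}/u_{1,k})\cdot u_{1,k}^{-2}$, a bounded sequence times a null sequence. The rest is routine.
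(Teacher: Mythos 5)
Your proposal is correct and follows essentially the same route as the paper: both isolate $\arctan\left(1/u_{2,k}\right)=\pi/4-2^{k-1}\arctan\left(1/u_{1,k}\right)$ from \eqref{eq_17} and reduce everything to $2^{k-1}\arctan\left(1/u_{1,k}\right)\to\pi/4$ via Theorem \ref{th_4} and relation \eqref{eq_3}. Your version is in fact tighter than the paper's, since you justify the passage from $2^{k-1}/u_{1,k}$ to $2^{k-1}\arctan\left(1/u_{1,k}\right)$ with the explicit bound $\left|\arctan x - x\right|\le |x|^{3}/3$ (so the cubic error times $2^{k-1}$ vanishes), where the paper merely invokes \eqref{eq_3} informally.
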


\begin{proof}
We know that the limit
$$
\frac{\pi}{4}=\lim_{k\to\infty}\left[2^{k-1}\arctan\left(\frac{1}{u_{1,k}}\right)+\arctan\left(\frac{1}{u_{2,k}}\right)\right]
$$
is valid since the identity \eqref{eq_17} remains valid at any arbitrarily large integer $k$. We also know that 
from the Theorem \ref{th_4} and relation \eqref{eq_3} it follows that
$$
\frac{\pi}{4}=\lim_{k\to\infty}\frac{2^{k-1}}{u_{1,k}}=\lim_{k\to\infty}2^{k-1}\arctan\left(\frac{1}{u_{1,k}}\right).
$$
This signifies that
$$
\lim_{k\to\infty}2^{k-1}\arctan\left(\frac{1}{u_{1,k}}\right)=\lim_{k\to\infty}\left[2^{k-1}\arctan\left(\frac{1}{u_{1,k}}\right)+\arctan\left(\frac{1}{u_{2,k}}\right)\right].
$$
However, according to relation \eqref{eq_3} this equation can be simplified as
$$
\lim_{k\to\infty}\frac{2^{k-1}}{u_{1,k}}=\lim_{k\to\infty}\left[\frac{2^{k-1}}{u_{1,k}}+\frac{1}{u_{2,k}}\right]
$$
and the proof follows.
\end{proof}

Applying Theorem \ref{th_4} to the left side of approximation \eqref{eq_21} yields
$$
\frac{2^{k-1}}{{{u_{1,k}}}} \approx 2^{k-1}\arctan \left( {\frac{1}{{{u_{1,k}}}}} \right) +\arctan \left( {\frac{{1 - \tan \left( {{2^{k - 1}}/{u_{1,k}}} \right)}}{{1 + {{\tan }^2}\left( {{2^{k - 1}}/{u_{1,k}}} \right)}}} \right).
$$
or
$$
{{{u_{1,k}}}} \approx \frac{1}{\arctan \left( {\frac{1}{{{u_{1,k}}}}} \right) +\frac{1}{2^{k-1}}\arctan \left( {\frac{{1 - \tan \left( {{2^{k - 1}}/{u_{1,k}}} \right)}}{{1 + {{\tan }^2}\left( {{2^{k - 1}}/{u_{1,k}}} \right)}}} \right)}.
$$

We note that both arguments of the arctangent function tend to zero with increasing $k$. Therefore, referring to the relation \eqref{eq_3} again we can simplify the approximation above as
$$
{{{u_{1,k}}}} \approx \frac{1}{{\frac{1}{{{u_{1,k}}}}} + {\frac{{1 - \tan \left( {{2^{k - 1}}/{u_{1,k}}} \right)}}{{2^{k-1}\left(1 + {{\tan }^2}\left( {{2^{k - 1}}/{u_{1,k}}} \right)\right)}}}}.
$$
Using the Theorem \ref{th_4} it follows that at $k\to\infty$
$$
{\tan ^2}\left( {\frac{{{2^{k - 1}}}}{{{u_{1,k}}}}} \right) \to {\tan ^2}\left( {\frac{\pi }{4}} \right) \to 1.
$$
Consequently, the~value
\begin{equation}\label{eq_23}
1 + {{\tan }^2}\left( \frac{2^{k - 1}}{u_{1,k}} \right) \to 2
\end{equation}
with increasing $k$. This leads to
\begin{equation}\label{eq_24}
{u_{1,k}} \approx \frac{1}{{\frac{1}{{{u_{1,k}}}} + \frac{1}{{{2^k}}}\left( {1 - \tan \left( {\frac{{{2^{k - 1}}}}{{{u_{1,k}}}}} \right)} \right)}}, \qquad k >> 1.
\end{equation}

Comparing Equations \eqref{eq_17} and \eqref{eq_21} one can see that
\[
u_{2,k} \approx \frac{{1 + \tan^2 \left( {{2^{k - 1}}/{u_{1,k}}} \right)}}{{1 - \tan \left( {{2^{k - 1}}/{u_{1,k}}} \right)}}, \qquad k >> 1
\]
and due to relation \eqref{eq_23} this approximation can be further simplified to
\begin{equation}\label{eq_25}
u_{2,k}  \approx \frac{2}{1 - \tan \left( 2^{k - 1}/u_{1,k} \right)}, \qquad k >> 1.
\end{equation}

Although this equation only approximates the second constant $u_{2,k}$, its accuracy, nevertheless, improves with increasing $k$. Perhaps, the~approximation \eqref{eq_25} can also be used at larger values of the integer $k$ as an alternative to the exact formula \eqref{eq_19} based on the two-step iteration \eqref{eq_14}.

We can see consistency of the approximations \eqref{eq_23} and \eqref{eq_25} with Theorem \ref{th_4} and Lemma \ref{lm_2}. In~particular, when $k$ tends to infinity the constants $u_{1,k}$ and $u_{2,k}$ also tend to infinity. Therefore, in~order to enhance a convergence rate, it is important to obtain the integer $u_{1,k}$ in the two-term Machin-like formula \eqref{eq_17} for $\pi$ as large as possible. Once the value of the first constant $u_{1,k}$ is determined, 
the second constant $u_{2,k}$ can be computed by using Equation~\eqref{eq_19} based on two-step iteration formula \eqref{eq_14}. For~example, at~$k = 27$ the value $u_{1,27}$ = 85,445,659. The~corresponding Lehmer's measure is $\mu\approx 0.245319$ only. Such a small Lehmer's measure implies a rapid convergence rate. In~particular, we can observe $16$ correct decimal digits of $\pi$ per term increment. This can be confirmed by running a Mathematica program provided in~\cite{Abrarov2018b}.

At $k=27$ Equation~\eqref{eq_19} yields a rational number $u_{2,27}$ consisting of 522,185,807 digits in numerator and 522,185,816 digits in denominator. Such a quotient with huge numbers in numerator and denominator is not unusual and can also be observed in Borwein integrals. Specifically, Bäsel and Baillie in their work~\cite{Basel2016} showed that a formula for $\pi$ can be generated with a quotient consisting of 453,130,145 and 453,237,170 digits in its numerator and denominator, respectively. The~interested readers can download the exact number $u_{2,27}$ with all digits from~\cite{Abrarov2017c}.

\section{Implementation}

At first glance, the approximation \eqref{eq_24} does not look interesting as its both sides contain the constant 
${u_{1,k}}$ and it is unclear how to represent it in explicit form. However, sample computations we performed with this approximation show that it can be implemented effectively. In~particular, we noticed that application of approximation \eqref{eq_24} in iteration provides a result that tends to be more accurate with increasing the integer $k$.

Consider for example $k = 10$. In~this case we have that
\[
{u_{1,10}} = \left\lfloor {\frac{{{a_{10}}}}{{\sqrt {2 - {a_9}} }}} \right\rfloor  = 651.
\]
With initial guess for ${u_{1,10}}$, say $1000$, after~just $5$ iterations (self-substitutions) we obtain
\[
{u_{1,10}} \approx 651.899.
\]
This can be seen by running the following Mathematica command lines that show the results of computation based on this iteration:
\vspace{-0.25cm}
\begin{shaded}
\begin{verbatim}
k=10;
a[1]=Sqrt[2];
a[n_]:=a[n]=Sqrt[2+a[n-1]];
Print["Exact value: ",Floor[a[10]/Sqrt[2-a[9]]]];

u2k:=1000;
itr=1;
Print["Initial guess value: ",u2k];

Print["--------------------------"];
Print["Iteration    ","Approximation"];
Print["-------------- -----------"];
While[itr<=5,u2k=1/(1/u2k+1/2^k (1-Tan[2^(k-1)/u2k]));
    Print[itr,"            ",u2k//N];itr++];
\end{verbatim}
\end{shaded}
\vspace{-0.25cm}
\noindent The Mathematica generates the following output:
\begin{alltt}
Exact value: 651
Initial guess value: 1000
--------------------------
Iteration    Approximation
--------------------------
1            700.404
2            654.196
3            651.905
4            651.899
5            651.899
\end{alltt}
We have ceased the iterative process after $5$-th cycle since two successive numbers coincide with each other at fourth and fifth iterations with same output $651.899$.

Application of the Equation~\eqref{eq_24} is convenient since for each consecutive increment of the integer $k$ the following inequality remains valid
\begin{equation}\label{eq_26}
2{u_{1,k}} \le {u_{1,k + 1}} \le 2{u_{1,k}} + 1.
\end{equation}
This inequality follows from the property of the floor function that is used in \linebreak Equation~\eqref{eq_5}; the constant ${u_{1,k + 1}}$ should be either equal to $2{u_{1,k}}$ or larger it by unity (see~\cite{WolframCloud} for some examples). Thus, based on Equation~\eqref{eq_24} and inequality \eqref{eq_26} we can make the \linebreak following assumption
\begin{equation}\label{eq_27}
{u_{1,k + 1}} = \left\lfloor {2\cdot\frac{1}{{\frac{1}{{{u_{1,k}}}} + \frac{1}{{{2^k}}}\left( {1 - \tan \left( {{2^{k - 1}}/{u_{1,k}}} \right)} \right)}}} \right\rfloor.
\end{equation}

The computational tests we performed shows that this formula provides correct results for a large range of the integer $k\ge 2$. However, its  general applicability for any arbitrarily large $k$ yet to be~proved.

There are different methods to approximate the tangent function in Equation~\eqref{eq_27}. One of the ways is to truncate the following expansion series
\begin{equation}\label{eq_28} % It is Equation (28) instead of Equation (28a) now
\begin{aligned}
\tan \left( x \right) =& \sum\limits_{n = 1}^\infty  {\frac{{{{\left( { - 1} \right)}^{n - 1}}{2^{2n}}\left( {{2^{2n}} - 1} \right){B_{2n}}}}{{\left( {2n} \right)!}}{x^{2n - 1}}} \\
=& x + \frac{{{x^3}}}{3} + \frac{{2{x^5}}}{{15}} + \frac{{17{x^7}}}{{315}} + \frac{{62{x^9}}}{{2835}} +  \cdots  \Leftrightarrow \tan \left( x \right) = x + O\left( {{x^3}} \right),
\end{aligned}
\end{equation}
where ${B_n}$ are the Bernoulli numbers, defined by a contour integral
$$
{B_n} = \frac{{n!}}{{2\pi i}}\oint {\frac{z}{{{e^z} - 1}}\frac{{dz}}{{{z^{n + 1}}}}}.
$$
Although this series expansion is rapid in convergence, its application may not be optimal since it requires the determination of the Bernoulli numbers. One of the ways to compute them is given by the following identity
$$
{B_n} = \sum\limits_{m = 0}^n {\frac{1}{{m + 1}}} \sum\limits_{\ell  = 0}^m {{{\left( { - 1} \right)}^\ell }\left( \begin{aligned}
m\\
\ell
\end{aligned} \right){\ell ^n}}.
$$
We can see that this formula involves the double summation and, therefore, cannot be rapid in principle especially at larger orders of $n$. Although~other methods of computation of the Bernoulli numbers are more efficient, their implementations require quite sophisticated algorithms~\cite{Knuth1967, Koepf1992, Harvey2010}.

Alternatively, the~tangent function may also be computed by using continued fractions~\cite{Trott2011, Havil2012, Oliver2012}. However, algorithmic implementation of the continued fractions may not be optimal for our particular~task.

This problem can be resolved by noticing that at each consecutive step of iteration the integer ${u_{1,k}}$ increases, and because of this the argument of the tangent function decreases. Consequently, it may be reasonable to utilize argument reduction method for computation of the tangent function~\cite{Beebe2017}. As~a simplest case we can use, for~example, the double angle identity providing argument reduction by a factor of two
$$
\tan \left(2 x \right) = \frac{{2\tan \left( {x} \right)}}{{1 - {{\tan }^2}\left( {x} \right)}}.
$$
Therefore, taking into account that in accordance with \eqref{eq_28} $\tan\left(x\right)\to x$ at $x \to 0$, we can approximate the double angle identity above as
\[
\tan\left(2x\right) \approx \frac{2x}{{1 - {{{x}}^2}}}, \qquad \left| x \right| << 1.
\]
This approximation implies that the arctangent function can be calculated in a simple iteration over and over again by defining the following function
\begin{equation}\label{eq_29}
f_n\left(x\right) = \frac{2f_{n-1}\left(x\right)}{1 - f_{n-1}^2\left(x\right)}\approx \tan\left(2^{n}x\right),
\end{equation}
where
\[
f_1\left(x\right) = \frac{2x}{1 - x^2}.
\]
Tangent % This sentence is as a part of the paragraph
 function can be computed more accurately by defining $$f_1\left(x\right) = \frac{2\left(x+x^3/3\right)}{1 - \left(x+x^3/3\right)^2}$$ since according to expansion series \eqref{eq_28} we can also infer that $\tan\left(x\right)=x+x^3/3+O\left(x^5\right)$.

The following Mathematica command lines execute the program for computation of the integer constant ${u_{1,k}}$ by using Equations \eqref{eq_27} and \eqref{eq_29}:
\vspace{-0.25cm}
\small
\begin{shaded}
\begin{verbatim}
(* Clear previous value *)
Clear[\[Beta]1];
(* Set of nested radicals *)
a[0]=0;a[k_]:=a[k]=Sqrt[2+a[k-1]];

(* Equation (5) *)
\[Beta]1[k_]:=\[Beta]1[k]=Floor[a[k]/Sqrt[2-a[k-1]]];

(* Applying Equation (29) in iteration *)
f[x_,1]:=f[x,1]=SetPrecision[(2*x)/(1-x^2),k];
f[x_,n_]:=f[x,n]=(2*f[x,n-1])/(1-f[x,n-1]^2);

(*Main computation*)
func[u1_,k_]:=func[u1,k]=1/(1/u1+1/2^k*(1-f[1/u1,k-1]));

k=2; (* iteger k *)
fstConst=2; (* first constant *)
kMax=30; (* max number for iteration *)
str={{"Integer k","  |  ","Equation (5)","  |  ","Equation (27)"}};

While[k<=kMax,AppendTo[str,{k,"  |  ",\[Beta]1[k],"  |  ",fstConst}];
    fstConst=Floor[func[2*fstConst,k+1]];k++];
Print[TableForm[str]];
\end{verbatim}
\end{shaded}
\vspace{-0.25cm}
\normalsize
\noindent The Mathematica generates the following table:
\begin{alltt}
Integer k  |  Equation (5)  |  Equation (27)
2          |  2             |  2
3          |  5             |  5
4          |  10            |  10
5          |  20            |  20
6          |  40            |  40
7          |  81            |  81
8          |  162           |  162
9          |  325           |  325
10         |  651           |  651
11         |  1303          |  1303
12         |  2607          |  2607
13         |  5215          |  5215
14         |  10430         |  10430
15         |  20860         |  20860
16         |  41721         |  41721
17         |  83443         |  83443
18         |  166886        |  166886
19         |  333772        |  333772
20         |  667544        |  667544
21         |  1335088       |  1335088
22         |  2670176       |  2670176
23         |  5340353       |  5340353
24         |  10680707      |  10680707
25         |  21361414      |  21361414
26         |  42722829      |  42722829
27         |  85445659      |  85445659
28         |  170891318     |  170891318
29         |  341782637     |  341782637
30         |  683565275     |  683565275
\end{alltt}

Thus, we can see the feasibility of computation of the constant ${u_{1,k}}$ without any irrational (surd) numbers. Just by applying only arithmetic manipulations (summations, multiplications and divisions) we can compute the integer ${u_{1,k}}$ by iteration based on Equation~\eqref{eq_27}.

\section{Quadratic convergence}

There is another interesting application of Equation~\eqref{eq_24}. In~particular, we found experimentally that the following formula defined by iteration
\begin{equation}\label{eq_30}
\theta_{n+1} = \frac{1}{{\frac{1}{\theta_n} + \frac{1}{{{2^k}}}\left( {1 - \tan \left( {\frac{{{2^{k - 1}}}}{{\theta_n}}} \right)} \right)}},
\end{equation}
leads to a quadratic convergence to the constant $\pi$ such that (by assumption)
\[
\pi = \lim_{n\to\infty}\frac{2^{k+1}}{\theta_n}, \qquad k\ge 1.
\]

The quadratic convergence to $\pi$ can be observed by running the command lines:
\vspace{-0.25cm}
\small
\begin{shaded}
\begin{verbatim}
Clear[k,\[Theta]]

k=7;(* assign value of k *)
\[Theta]=2^k;(* initial guess *)
str={{"Iteration No."," | ","Computed digits of \[Pi]"}};

If[k>17,Print["Please wait. Computing..."]];

(* Equation (30) used in iteration *)
For[n=1,If[k<15,n<=15,n<=k],n++,\[Theta]=SetPrecision[1/(1/\[Theta]+
    1/2^k*(1-Tan[2^(k-1)/\[Theta]])),2^(n+1)];AppendTo[str,
        {n," | ",MantissaExponent[Pi-2^(k+1)/\[Theta]][[2]]//Abs}]];

Print[str//TableForm];
\end{verbatim}
\end{shaded}
\vspace{-0.25cm}
\normalsize
\noindent The output of Mathematica is the following table:
\begin{alltt}
Iteration No. | Computed digits of \(\pi\)
1             | 0
2             | 1
3             | 4
4             | 9
5             | 19
6             | 39
7             | 79
8             | 159
9             | 319
10            | 639
11            | 1278
12            | 2558
13            | 5116
14            | 10233
15            | 20468
\end{alltt}
As we can see from this table, after~third iteration the number of correct digits of $\pi$ increases by factor of two at each consecutive step of~iteration.

{More explicitly, the~dynamics of computation of $\pi$ can be seen by running the following Mathematica code:}
\vspace{-0.25cm}
\begin{shaded}
\begin{verbatim}
Clear[k,\[Theta]]

k=7;(*assign value of k*)
\[Theta][0]:=2^k;(*initial guess*)

(* Iteration formula (30) *)
\[Theta][n_]:=1/(1/\[Theta][n-1]+1/2^k*(1-Tan[2^(k-1)/
    \[Theta][n-1]]));

(* Approximated value of \[Pi] *)
piAppr[n_]:=2^(k+1)/\[Theta][n];

Print["Iteration 1"];
Print[N[piAppr[1],25],"..."];

Print["Iteration 2"];
Print[N[piAppr[2],25],"..."];

Print["Iteration 3"];
Print[N[piAppr[3],25],"..."];

Print["Iteration 4"];
Print[N[piAppr[4],25],"..."];

Print["Iteration 5"];
Print[N[piAppr[5],25],"..."];

Print["------------------"];
Print["Actual value of \[Pi]"];
Print[N[Pi,25],"..."];
\end{verbatim}
\end{shaded}
\vspace{-0.25cm}
\noindent Mathematica returns the following output:
\begin{alltt}
Iteration 1
2.907395020312418973489641...

Iteration 2
3.128878092399718501843067...

Iteration 3
3.141552409181815125317050...

Iteration 4
3.141592653184895576712223...

Iteration 5
3.141592653589793238421658...

Actual value of \(\pi\)
3.141592653589793238462643...
\end{alltt}
{As we can see, the~first five iterations provide $0$, $1$, $4$, $9$ and $19$ correct decimal digits of $\pi$, respectively. The~actual value of $\pi$, generated by Mathematica built-in function, is also shown for comparison.}

The quadratic convergence to $\pi$ can be implemented by using the Brent--Salamin algorithm~\cite{Brent1976, Salamin1976,Newman1985,Lord1992} (It is also % This sentence is is ok
 known as the Gauss--Brent--Salamin algorithm). However, in~contrast to the Brent--Salamin algorithm the proposed iteration formula \eqref{eq_30} provides quadratic convergence to $\pi$ without any irrational (surd) numbers.

\section{Conclusions}

In this work we propose a method for determination of the integer $u_{1,k}$. In~particular, the~algorithmic implementation of the Formula \eqref{eq_27} shows that it can be used as an alternative to Equation~\eqref{eq_5} requiring a set of the nested radicals $\left\{{a_k}\right\}$ defined as $a_{k}=\sqrt{2+a_{k-1}}$ and $a_1=0$. This method is based on a simple iteration and can be implemented without any irrational (surd) numbers.

\section*{Acknowledgments}
This work is supported by National Research Council Canada, Thoth Technology Inc., York University and Epic College of~Technology.

\bigskip

\end{document}